\newcommand \comment[1]{}           %  Silent version.
\renewcommand \comment[1]{\emph{[#1]}}      %  Comment revealed.
\newtheorem{lem}{Lemma}
\newtheorem{cor}[lem]{Corollary}
\newtheorem{prop}[lem]{Proposition}
\newtheorem{thm}[lem]{Theorem}
\newtheorem{problem}[lem]{Problem}
\theoremstyle{definition}
\newtheorem{defini}[lem]{Definition}
\renewcommand{\phi}{\varphi}                 % Personal preferences.
\renewcommand{\epsilon}{\varepsilon}
\newcommand\bbN{\mathbb{N}}
\newcommand\bbR{\mathbb{R}}
\newcommand\bbZ{\mathbb{Z}}
\newcommand\cA{{\mathcal A}}
\newcommand\cN{{\mathcal N}}
\newcommand\G{\Gamma}
\begin{document}

%%%%%%%%%%%%%%%%%%%%%%%%%%%%%%%%%%%%%%%%%%%%%%%%%%%%%%%%%%
\begin{center}

\Large
%\title
{Bijections between affine hyperplane arrangements and valued graphs}

\vskip10pt
%\comment{WHY truncated?}
\normalsize

DEDICATED TO MICHEL LAS VERGNAS

\vskip20pt

{Sylvie Corteel\footnote{The research of the first  author is supported by the ICOMB project, grant number ANR-08-JCJC-0011.}  \\
LIAFA, CNRS et Universit\'e Paris Diderot 7   \\
 Case 7014,
75205 Paris cedex 13, France \\[5pt]

David Forge and V\'eronique Ventos\footnote{The research of the last two  authors is supported by the TEOMATRO project, grant number ANR-10-BLAN 0207.}\\
Laboratoire de recherche en informatique UMR 8623\\
B\^at.\ 650, Universit\'e Paris-Sud\\
91405 Orsay Cedex, France\\[5pt]

E-mail: {\tt corteel@liafa.univ-paris-diderot.fr, forge@lri.fr, ventos@lri.fr}}\\[10pt]
%\thanks
%{Version of \today.}\\[20pt]

\end{center}

\small
 %\begin{abstract}
 {\sc Abstract.}
We show new bijective proofs of previously known formulas for the number of regions of some deformations of the braid arrangement, 
by means of a bijection between the no-broken-circuit sets of the corresponding integral gain graphs and some kinds of labelled binary trees. 
This leads to new bijective proofs for the Shi, Catalan, and similar hyperplane arrangements. %It may lead to interesting new properties of the Linial arrangement. 

 %\end{abstract}

%\subjclass[2010]
\emph{Mathematics Subject Classifications (2010)}:
{\emph{Primary} 05C22; \emph{Secondary} 05A19, 05C05, 05C30, 52C35.}

%\keywords
\emph{Key words and phrases}:
{Integral gain graph, no broken circuits,
local binary search tree, Shi arrangement, Braid arrangement, Affinographic hyperplane arrangement}
 \normalsize\\

%\thanks{Version of \today.}

%\maketitle
\vskip 20pt

%%%%%%%%%%%%%%%%%%%%%%%%%%%%%%%%%%%%%%%%%%%%%%%%%%%%%%%%%%%%%
\section{Introduction}\label{intro}

An \emph{integral gain graph} is a graph whose edges are labelled
invertibly by integers; that is, reversing the direction of an edge
negates the label (the \emph{gain} of the edge).  The \emph{affinographic
hyperplane arrangement}, $\cA[\Phi]$, that corresponds to an integral gain graph $\Phi$
is the set of all hyperplanes in $\bbR^n$ of the form $x_j-x_i=g$ for
edges $(i,j)$ with $i<j$ and gain $g$ in $\Phi$.  (See \cite[Section IV.4.1, pp.\ 270--271]{BG} or \cite{SOA}.)

 In recent years there has been much interest in real hyperplane
arrangements of this type, such as the braid arrangement, the Shi arrangement, the Linial
arrangement, and the composed-partition or Catalan arrangement. For
all these families,  the characteristic
polynomials and the number of regions  have been found \cite{PS}. 
For the Shi, Braid, Linial and Catalan arrangements, it is known that the regions
are in bijection with certain labelled trees or parking functions. See \cite{A99,AL99,GL,GLV, P,St2}
and references therein.

In this paper we give bijective proofs of the number of regions for some of these arrangements by establishing bijections 
between the  no-broken-circuit (NBC) sets and types of labelled trees and forests, which can be counted directly.  
This means that we use the fact that the number of regions is equal to the number
of NBC sets. This idea allows us to give a bijection between regions of hyperplane arrangements
defined by $x_j-x_i=g$ with $g\in [a,b]$ and $a+b=0$ or $a+b=1$; 
that is the hyperplane arrangements of the type "extended braid" and "extended Shi".

The paper is organized as follows. In Section \ref{defs}, we give some basic definitions.
In Section 3, we define the core idea; that is the definition of the height function.
In Section 4, we present NBC sets and trees. In Section 5, we characterize the NBC trees
and use this characteriztion in Section 6 to give a bijection between the NBC-trees and certain
labelled trees. In Section 7, we highlight two special cases and we end in Section 8 with some
concluding remarks.

%%%%%%%%%%%%%%%%%%%%%%%%%%%%%%%%%%%%%%%%%%%%%%%%%%%%%%%%%%%%%

\section{Basic definitions}\label{defs}

An \emph{integral gain graph} $\Phi = (\G,\phi)$ consists of a graph
$\G=(V,E)$ and an orientable function $\phi: E \to \bbZ$, called the
\emph{gain mapping}.  Orientability means that, if $(i,j)$ denotes an edge
oriented in one direction and $(j,i)$ the same edge with the opposite
orientation, then $\phi(j,i) = -\phi(i,j)$.  We have no loops
but multiple edges are permitted. For the rest of the paper, 
we denote the vertex set by $V = \{1,2,\ldots,n\} =: [n]$ with $n\ge1$.
 We use the notations $(i,j)$ for an edge with endpoints $i$ and $j$, oriented from $i$ to $j$, and $g(i,j)$ for
such an edge with gain $g$; that is, $\phi(g(i,j))=g$.  (Thus $g(i,j)$ is the same edge as $(-g)(j,i)$.  The edge $g(i,j)$ corresponds to a hyperplane whose equation is $x_j-x_i=g$.)
 A \emph{circle} is a connected 2-regular subgraph, or its edge set.  
 Writing a circle $C$ as a word $e_1e_2\cdots e_l$, the gain of $C$ is
$\phi(C):=\phi(e_1)+\phi(e_2)+\cdots+\phi(e_l)$; then it is well defined whether the gain is zero or nonzero.
 A subgraph is called \emph{balanced} if every circle in it
has gain zero. We will consider most especially balanced circles.

Given a linear order $<_O$ on the set of edges $E$, a \emph{broken circuit} is the set
of edges obtained by deleting the smallest element in a balanced circle.  
A set of edges, $N\subseteq E$, is a \emph{no-broken-circuit set} (NBC set for short)
if it contains no broken circuit. This notion from matroid theory
(see \cite{Bjorner} for reference) is very important here. We denote by 
$\mathcal N$ the set of NBC sets of the gain graph. It is well known 
that this set depends on the choice of the order, but its cardinality does not.

We can now transpose some ideas or problems from hyperplane arrangements
to gain graphs. For any integers $a,b, n$, let $K_n^{ab}$ be the gain graph
built on vertices $V=[n]$ by putting on every edge $(i,j)$ all the gains
$k$, for $a\le k\le b$. These gain graphs are expansion of the complete graph and 
their corresponding arrangements are called sometimes  deformations of the braid 
arrangement, truncated arrangements or affinographic arrangements.  We have four main 
examples coming from well known hyperplane arrangements.  
We denote by $B_n$ the gain graph $K_n^{00}$  and call it the \emph{braid gain graph}, by
$L_n$ the gain graph $K_n^{11}$  and call it the 
\emph{Linial gain graph}, by $S_n$ the gain graph $K_n^{01}$  and call it the \emph{Shi gain graph}
and finally by $C_n$ the gain graph $K_n^{-11}$  and call it the \emph{Catalan gain graph}.

%%%%%%%%%%%%%%%%%%%%%%%%%%%%%%%%%%%%%%%%%%%%%%%%%%%%%%%%%%%%%%%%%%%%%%%%%%
\section{Height}

We introduce the notion of height function on an integral gain graph on the vertex set $[n]$.
A height function $h$ defines two important things for the rest of the paper: the induced 
gain subgraph $\Phi[h]$ of a gain graph $\Phi$ and an order $O_h$ on the set
of vertices extended lexicographically to the set of edges.

\begin{defini}
A \emph{height function} on a set $V $ is a function $h$ from
$V$ to $\bbN$ (the natural numbers including 0) such that $h^{-1}(0)\not=\emptyset$. The \emph{corner} of the
height function is the smallest element of greatest height.

Let $\Phi$ be a connected and balanced integral gain graph on a set $V$ of integers. 
The {\it height function} of the gain graph is the unique height function
$h_\Phi$ such that for every edge $g(i,j)$ we have $h_\Phi(j)-h_\Phi(i)=g$.
(Such a function exists iff $\Phi$ is balanced.) The \emph{corner of $\Phi$} is the corner of $h_\Phi$.

%We say that a height function $h$ on $V$ is \emph{coherent with} a connected gain graph
%$\Phi$ on $V$ if there is a connected, balanced spanning subgraph $\Phi '$ of
%$\Phi$ such that $h=h_{\Phi '}$.  For example, this definition applies to a spanning tree as it is a connected, balanced subgraph. 

We say that an edge $g(i,j)$ is \emph{coherent with $h$} if $h(j)-h(i)=g$.
\end{defini}

%(The question of whether a height function is coherent with a gain graph will not be studied %here but is by itself of interest.)

\begin{defini}
Let  $\Phi$ be a gain graph also on $V=[n]$ and $h$ be a height function on  $V$. The subgraph $\Phi[h]$ of $\Phi$ \emph{selected by $h$}
is the gain subgraph on the same vertex set $V$ whose edges are the edges of $\Phi$ that are coherent with $h$.
%.the edges $g(i,j)$ such that $h(j)-h(i)=g$.
\end{defini}

%\begin{prop}
%A height function $h$ is coherent with a connected gain graph $\Phi$ iff $\Phi[h]$ is connected.\qed
%\end{prop}

\begin{defini}
Given a height function $h$ on the set $V$, the order $O_h$ on the set $V=[n]$ is defined by $i<_{O_h}j$ iff $h(i)>h(j)$ or  ($h(i)=h(j)$ and $i<j$). 
The order $O_h$ is extended lexicographically to an order $O_h$ on the set of edges coherent with the height function.
\end{defini}

For example if $n=4$, $a=0$, $b=1$, and the height function $h$ has $h(2)=h(4)=1$ and $h(1)=h(3)=0$, we get the order
$2<_{O_h} 4 <_{O_h}1 <_{O_h} 3$. The corresponding $K_{4}^{01}[h]$ is given in Figure \ref{example}. Note that only 5 of the 12
edges  are coherent with the height function.

\begin{figure}
\includegraphics{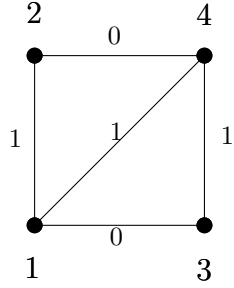}
\caption{The gain graph $K_{4}^{01}[h]$ for $h(2)=h(4)=1$ and $h(1)=h(3)=0$}
\label{example}
\end{figure}

%%%%%%%%%%%%%%%%%%%%%%%%%%%%%%%%%%%%%%%%%%%%%%%%%%%%%%%%%%%%%%%%%%%%%%%%%%
\section{NBC sets and NBC trees in  gain graphs}\label{nbc-semi}

Given a linear order $<_O$ on the set of edges $E$, a \emph{broken circuit} is the set
of edges obtained by deleting the smallest element in a balanced circle.  
An NBC set in a gain graph $\Phi$ is basically an edge set, as it arises from matroid theory.  We usually assume an NBC set is a spanning subgraph, i.e., it contains all vertices.  Thus, an NBC tree is a spanning tree of $\Phi$.  Sometimes we wish to have non-spanning NBC sets, such as the components of an NBC forest; then we write of NBC \emph{subtrees}, which need not be spanning trees. The set of the NBC sets of $\Phi$ with respect to an order $O$
is denoted ${\mathcal N}_O(\Phi)$.

Given a height function $h$, a gain graph $\Phi$ and a linear order $<_{O_h}$ on the edges, they determine the set of NBC sets of the subgraph $\Phi[h]$ relative to the order $<_{O_h}$, 
denoted by ${\mathcal N}_O(\Phi[h])$. As always, this set depends on the choice of the order but  its cardinality does not.

\begin{lem}
Given an NBC tree $A$ of height function $h$ ($h=h_A$) with corner $c$, the forest $A\setminus c$ 
is a disjoint union of NBC subtrees 
%\comment{NBC trees = spanning, NBC subtrees = not necessarily.  Add to previous.} 
of height functions $h_1$,...,$h_k$, and the orders $O_{h_i}$ are restrictions of the order $O_h$.\qed
\end{lem}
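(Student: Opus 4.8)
The plan is to exploit two facts: deleting a vertex from a tree leaves a forest whose components are again trees, and the no-broken-circuit condition is hereditary under taking subsets of edges. The additive indeterminacy in the height function is what connects the global order $O_h$ to the local orders $O_{h_i}$.

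First I would record the structural decomposition. Since $A$ is a tree, deleting the vertex $c$ and its incident edges leaves a forest whose connected components $T_1,\dots,T_k$ are themselves subtrees of $A$. Each $T_i$ is connected and, being acyclic, contains no circle and is therefore balanced; hence each $T_i$ possesses a unique height function $h_i:=h_{T_i}$ by the existence-and-uniqueness statement for connected balanced gain graphs.

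Next I would pin down how $h_i$ relates to $h$, which is the crux of the order claim. Every edge of $T_i$ is an edge of $A$ and hence coherent with $h$, so $h$ restricted to $V(T_i)$ satisfies $h(j)-h(i)=g$ for each edge $g(i,j)$ of $T_i$. Any two functions on the connected graph $T_i$ satisfying these equations differ by a global additive constant, and the normalization $h_i^{-1}(0)\neq\emptyset$ forces $h_i$ to equal $h|_{V(T_i)}$ shifted down by $\min_{v\in V(T_i)}h(v)$. Since $h_i$ and $h|_{V(T_i)}$ differ by a constant, for all $u,v\in V(T_i)$ we have $h_i(u)>h_i(v)$ exactly when $h(u)>h(v)$ and $h_i(u)=h_i(v)$ exactly when $h(u)=h(v)$; thus $u<_{O_{h_i}}v$ iff $u<_{O_h}v$. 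Because the edge order is the lexicographic extension of the vertex order, $O_{h_i}$ is precisely the restriction of $O_h$ to the edges of $T_i$, which is the last assertion of the lemma.

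Finally I would transfer the NBC property. The no-broken-circuit condition is hereditary: a subset of an edge set containing no broken circuit again contains no broken circuit. Since $T_i\subseteq A$ and $A$ is an NBC tree with respect to $O_h$, the subtree $T_i$ contains no broken circuit with respect to $O_h$. It remains to see that changing from $O_h$ to $O_{h_i}$ creates no new broken circuit inside $T_i$. Suppose a broken circuit with respect to $O_{h_i}$ were contained in $E(T_i)$; it is a path $C\setminus\{e\}$ obtained from a balanced circle $C$ by deleting its $O_{h_i}$-smallest edge $e$. Since this path lies in $T_i$, its two endpoints, and hence also the endpoints of the chord $e$, lie in $V(T_i)$, so all of $C$ is internal to $V(T_i)$ (in particular $C$ avoids $c$). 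By the previous paragraph $O_h$ and $O_{h_i}$ agree on all edges with both endpoints in $V(T_i)$, so $e$ is also the $O_h$-smallest edge of $C$ and $C\setminus\{e\}$ is a broken circuit with respect to $O_h$ as well, contradicting that $T_i$ contains no $O_h$-broken circuit. Hence each $T_i$ is an NBC subtree, as claimed. The step I expect to be most delicate is exactly this last one: making precise that every broken circuit meeting $E(T_i)$ arises from a circle internal to $V(T_i)$, so that the passage from $O_h$ to $O_{h_i}$ is harmless; the decomposition into subtrees and the constant-shift identification of the height functions are routine once uniqueness of the height function is invoked.
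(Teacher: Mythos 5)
Your proof is correct. The paper states this lemma without any proof (the \verb|\qed| marks it as regarded as immediate), so there is no printed argument to compare against; your write-up supplies exactly the verification the authors leave implicit: components of $A\setminus c$ are connected, acyclic, hence balanced subtrees; uniqueness of the height function together with the normalization $h_i^{-1}(0)\neq\emptyset$ forces $h_i=h|_{V(T_i)}-\min_{v\in V(T_i)}h(v)$, so the vertex orders, and lexicographically the edge orders, restrict. You also correctly isolate and resolve the only delicate point, which the paper glosses over entirely: a broken circuit inside $E(T_i)$ comes from a circle all of whose vertices lie in $V(T_i)$ (a circle minus an edge is a spanning path of that circle, so no vertex is lost, and in particular $c$ is avoided), and on edges internal to $V(T_i)$ coherence with $h_i$ coincides with coherence with $h$ since the two functions differ by a constant, so the orders $O_h$ and $O_{h_i}$ pick out the same smallest edge of any such circle. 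Hence an $O_{h_i}$-broken circuit in $T_i$ would already be an $O_h$-broken circuit in $A$, contradicting that $A$ is NBC; nothing further is needed.
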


It is known from matroid theory that the NBC sets of the semimatroid of an affine arrangement $\cA$, 
with respect to a given ordering $<_O$ of the edges, correspond to the regions of the arrangement \cite[Section 9]{PS}.  
The semimatroid of $\cA[\Phi]$ is the frame (previously ``bias'' in \cite{BG}) semimatroid of $\Phi$, which consists of the balanced edge sets of the gain graph 
$\Phi$ (\cite[Sect.\ II.2]{BG} or \cite{SOA}).  Thus, the NBC sets of that semimatroid are spanning forests of $\Phi$.  Therefore $|\cN_O(\Phi)|$ equals the number of regions of $\cA[\Phi]$.

We show that the total number of NBC trees in an integral gain graph $\Phi$ equals the sum, over all height functions $h$, of the number of NBC trees in $\Phi[h]$.

Let $\Phi$ be connected.  Then we can decompose $\cN_O(\Phi)$ into disjoint subsets $\cN_O(\Phi[h])$, one for each height function $h$ that is coherent with $\Phi$ (that means that $\Phi[h]$ is also connected).  We have now: 
\[
\cN_O(\Phi) = \biguplus_h \{ \cN_O(\Phi[h]) \mid h \text{ is coherent with } \Phi \}.
\]
%\comment{PROBLEM.  That is only valid for NBC trees.  The forests are not uniquely associated with height functions.}
Therefore, %\comment{?? Is the following okay?  Not clear yet.}
the total number of NBC trees of all $\Phi[h]$ with respect to all possible height functions $h$ equals the number of NBC trees of $\Phi$.

For example, the NBC trees corresponding to the gain graph $K_4^{01}[h]$ from Figure \ref{example}
are given in Figure \ref{example1}.
\begin{figure}
\includegraphics{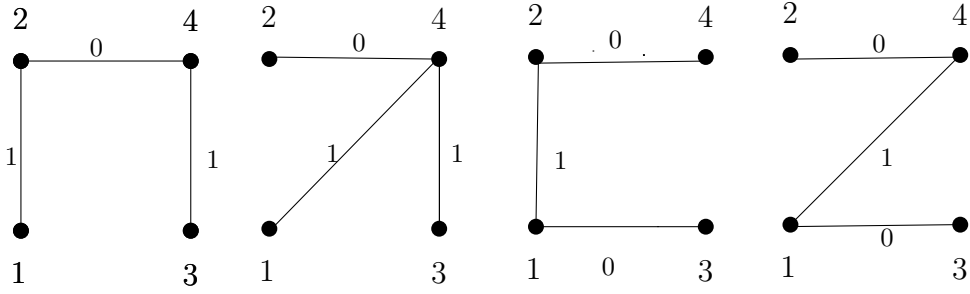}
\caption{The NBC trees of the gain graph $K_{4}^{01}[h]$}
\label{example1}
\end{figure}

%%%%%%%%%%%%%%%%%%%%%%%%%%%%%%%%%%%%%%%%%%%%%%%%%%%%%%%%%%%%%%%%%%%%%%%%%%
\section{$[a,b]$-gain graphs and their NBC trees}

Let $a$ and $b$ be two integers such that $a\le b$. 
The interval $[a,b]$ is the set $\{i\in \bbZ \mid a\le i\le b\}$.
We consider the gain graph $K^{ab}_n$ with vertices labelled by $[n]$
and with all the edges $g(i,j)$, such that $i<j$ and $g\in [a,b]$. 
These gain graphs, $K^{ab}_n$, are called $[a,b]$-gain graphs.
The arrangements that correspond to these
gain graphs, called deformations of the braid arrangement, have been of particular interest.  
The braid arrangement corresponds to the special case $a=b=0$. 
Other well studied cases are $a=-b$ (extended Catalan), $a=b=1$
(Linial) and $a=b-1=0$ (Shi).

We will describe the set of NBC trees of $K^{ab}_n[h]$ for a given
height function $h$.
The idea is that, as mentioned above, the height function $h$ defines
an order $O_h$ on a balanced subgraph. We will then be able to describe the NBC sets
coherent with $h$ for the order $O_h$.

%%%%%%new proposition maybe as you sais it can be detailed but the staatement is enough for what follows

\begin{prop}
Let $a$ and $b$ be integers such that $a\le b$.  
Let $h$ be a height function of corner $c$ and let $\Phi$ be a spanning tree of 
$K^{ab}_n[h]$.  Suppose $c$ is incident to
the edges $g_i(c,v_i)$, $1\le i\le k$, and let $\Phi_i$ be the  connected
component of $\Phi\setminus c$ containing $c_i$ (which is a subtree). 
Then  $\Phi$   is an NBC tree if and only if
all the $\Phi_i $ are NBC trees and each $v_i$ is the $O_h$-smallest  vertex of $\Phi_i$ adjacent to $c$ in $K^{ab}_n[h]$.
\label{david}
\end{prop}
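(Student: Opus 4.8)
The plan is to reduce the statement to the standard fundamental-circuit description of NBC bases and then read off the two conditions by a case analysis on the external edges, exploiting two features special to this setting: in $K^{ab}_n[h]$ every circle is balanced, and the corner $c$ is the $O_h$-minimal vertex.

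First I would record two preliminary facts. (i) Because every edge $g(i,j)$ of $\Phi[h]$ is coherent, i.e.\ $g=h(j)-h(i)$, the gain of any circle telescopes to $0$; hence \emph{every} circle of $K^{ab}_n[h]$ is balanced, and a broken circuit is simply a circle with its $O_h$-smallest edge deleted, with no balance condition left to check. Since a spanning tree contains no circle, this yields the usual reformulation (a standard matroid fact, see \cite{Bjorner}): a spanning tree $\Phi$ is an NBC tree if and only if, for every edge $e$ of $K^{ab}_n[h]$ with $e\notin\Phi$, the edge $e$ is \emph{not} the $O_h$-smallest edge of its fundamental circuit $C(e,\Phi)$, the unique circle contained in $\Phi+e$. (ii) Since $c$ has greatest height and is the smallest vertex of that height, $c$ is the $O_h$-minimal vertex; consequently, under the lexicographic extension of $O_h$ to edges, every edge incident to $c$ precedes every edge not incident to $c$, and two edges $(c,w),(c,w')$ are compared just by comparing $w$ and $w'$ in $O_h$. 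Fact (ii) is what will let me locate the smallest edge of each fundamental circuit by inspection.

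Next I would classify an external edge $e\notin\Phi$ by the position of its endpoints relative to the components $\Phi_1,\dots,\Phi_k$ of $\Phi\setminus c$. If both endpoints lie in the same $\Phi_i$ (type A), the tree path between them stays inside the subtree $\Phi_i$ (leaving $\Phi_i$ would require two distinct $c$-to-$\Phi_i$ edges in $\Phi$, impossible), so $C(e,\Phi)\subseteq\Phi_i+e$ and the condition ``$e$ is not $O_h$-smallest in $C(e,\Phi)$'' is precisely the NBC condition for $\Phi_i$; ranging over all such $e$, the type A requirements are equivalent to ``all $\Phi_i$ are NBC trees.'' (Here the earlier Lemma is used to identify the order on $\Phi_i$ as the restriction of $O_h$.) If $e=g(c,w)$ with $w\in\Phi_i$ and $w\neq v_i$ (type B), then $C(e,\Phi)$ consists of $e$, the edge $g_i(c,v_i)$, and the $\Phi_i$-path from $v_i$ to $w$; by fact (ii) the only candidates for the smallest edge are $e$ and $g_i(c,v_i)$, and $e$ is the smallest precisely when $w<_{O_h}v_i$. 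Hence ``$e$ is not smallest'' for all such $w$ is exactly the requirement that $v_i$ be the $O_h$-smallest vertex of $\Phi_i$ adjacent to $c$ in $K^{ab}_n[h]$.

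Finally, if the endpoints of $e$ lie in two different components $\Phi_i,\Phi_j$ (type C), then, $c$ being a cut vertex, the tree path joining them passes through $c$, so $C(e,\Phi)$ contains both $g_i(c,v_i)$ and $g_j(c,v_j)$ while $e$ itself is not incident to $c$; by fact (ii) the smallest edge of $C(e,\Phi)$ is one of these two $c$-edges and never $e$, so type C imposes no condition at all. Combining the three cases, $\Phi$ is an NBC tree iff the condition holds for every external edge iff all $\Phi_i$ are NBC trees and each $v_i$ is the $O_h$-smallest vertex of $\Phi_i$ adjacent to $c$. I expect the only real care to be needed in the bookkeeping of fact (ii)---verifying that the $O_h$-smallest edge of each fundamental circuit is genuinely forced by the edges meeting $c$; the balancedness of all circles and the cut-vertex structure at $c$ are precisely what make the three cases resolve so cleanly.
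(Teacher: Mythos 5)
Your proof is correct and takes essentially the same route as the paper's: both reduce NBC-ness to asking, for each external edge of $K^{ab}_n[h]$, whether it is the $O_h$-smallest edge of the (automatically balanced) circle it closes, and both settle this by the same three-way case analysis --- endpoints in one $\Phi_i$, endpoints in two different components, or one endpoint equal to $c$ --- using the fact that edges incident to the corner come first in the lexicographic order. Your write-up merely makes explicit the fundamental-circuit criterion and the telescoping argument that every circle of $K^{ab}_n[h]$ is balanced, both of which the paper leaves implicit.
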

%%%%%%%new proof
\begin{proof}
Everything comes from the choice of the order $O_h$ for the
vertices and the edges. If we have a vertex $v$ in $\Phi_i$
such that $v<_{O_h}v_i$ for which the edge $(c,v)\in K^{ab}_n[h] $ exists then this edge 
is smaller than all the edges of $\Phi _i +c$. Such an edge then closes a balanced
circle being the smallest edge of the circuit which is not possible.

In the other direction, if $\Phi$ is not an NBC tree then there is an 
edge $(x,y)$ in $ K^{ab}_n[h] $ closing a balanced circle by being the
smallest edge of the circuit. Since the $\Phi_i$ are by hypothesis are NBC trees
the vertices $x$ and $y$ cannot be in the same $\Phi_i$. They cannot be
in two different $\Phi_i$ either since the smallest edge would contain
$c$ necessarily. The last solution is that one of the vertex, say $x$, is $c$
and that the other vertex $y$ is in a $\Phi_i$. Since the edge $(c,v_i)$
will be in the circuit we need to have  $(x,y)<_{O_h}(c,v_i)$.
This implies the condition of the proposition.
\end{proof} 
%%%%%%%%%%%%% end of new proof

%%%%%%%%%%%%%%%%%%%%%%%%%%%%%%%%%%%%%%%%%%%%%%%%%%%%%%%%%%%%%%%%%%%%%%%%%%
\section{$[a,b]$-gain graphs with $a+b=0$ or 1}

%\begin {remark}\label{cas simple}
%In the two families $a=-b$ and $a=-b+1$ we obtain a simple construction because the last case never occurs. 
%In the case $a=-b+2$, the first
%interesting family (containing Linial as its first example), the last
%case can be rephrased by~: if the corner $c_i$ of $\Phi_i$ verifies
%$c_i>c$ and the smallest element $v$ of the line just below $c_i$
%(if this line is not empty) verifies $v<c$ than $\Phi_i$ can be
%connected to $c$ with gain $b$ only.
%\end{remark}

%We now assume that $a+b=0$ or 1.

%%%%%%%%%%%%%%%%%%%%%%% Changes

We start this Section by a Lemma that will help us for our recursive construction. 
\begin{lem}
If $a+b=0$ or 1, the vertices $v_i$ are the 
corners of the subtrees $\Phi_i$ (as in Proposition \ref{david}). 
\label{lemmetec}
\end{lem}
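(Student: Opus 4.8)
The plan is to reduce the whole statement to a single adjacency claim and then settle that claim by a short computation with heights. First I would recall that, by the first Lemma of Section \ref{nbc-semi}, each component $\Phi_i$ of $\Phi\setminus c$ carries its own height function $h_i$ whose order $O_{h_i}$ is the restriction of $O_h$; since the corner of a height function is exactly its $O$-minimal vertex (greatest height, then least label), the corner of $\Phi_i$ is the $O_h$-minimal vertex $w$ of $\Phi_i$. By Proposition \ref{david}, applied to the NBC tree $\Phi$, the vertex $v_i$ is the $O_h$-minimal vertex of $\Phi_i$ that is adjacent to $c$ in $K^{ab}_n[h]$. Because $w$ is $O_h$-minimal in all of $\Phi_i$ we always have $w\leq_{O_h}v_i$, so it will suffice to prove that $w$ itself is adjacent to $c$: minimality of $v_i$ among the $c$-neighbours would then give $v_i\leq_{O_h}w$, whence $w=v_i$.

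Next I would rewrite ``adjacent to $c$'' as an inequality on heights. Set $H:=h(c)$, which is the global maximum height. An edge between $c$ and a vertex $u$ exists in $K^{ab}_n[h]$ exactly when the index-ordered height difference lies in $[a,b]$; since $a\le 0\le b$ in both our cases and $c$ is the least-label vertex of height $H$, this unwinds to the clean rule: $u$ is adjacent to $c$ iff either $u>c$ and $h(u)\ge H+a$, or $u<c$ and $h(u)\ge H-b$. The only structural feature I will exploit is that the two thresholds satisfy $H+a=(H-b)+(a+b)$, so they coincide when $a+b=0$ and differ by exactly $1$ when $a+b=1$.

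Then I would run the computation. Since $v_i$ is adjacent to $c$ and $a+b\ge 0$, in either position we get $h(v_i)\ge H-b$, and hence $H_i:=h(w)\ge h(v_i)\ge H-b$. If $w<c$, its threshold is $H-b$, already met. If $w>c$, its threshold is $H+a$: for $a+b=0$ this equals $H-b$ and we are done, while for $a+b=1$ I must gain one unit, and here I would split on the position of $v_i$. If $v_i>c$ then $h(v_i)\ge H+a$ forces $H_i\ge H+a$. If instead $v_i<c<w$, then $w$ is the least-label vertex of maximal height in $\Phi_i$ while $v_i$ sits below it, so $v_i$ cannot be of maximal height; thus $h(v_i)\le H_i-1$, which together with $h(v_i)\ge H-b$ gives $H_i\ge (H-b)+1=H+a$. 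In every case $w$ meets its threshold, so $w$ is adjacent to $c$, closing the argument.

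I expect the only real obstacle to be the asymmetric case $a+b=1$. When $a+b=0$ the interval $[a,b]$ is symmetric, adjacency to $c$ depends on height alone, and the bound $H_i\ge H-b$ settles everything at once; but when $a+b=1$ adjacency depends on whether a vertex lies to the left or right of $c$, and the dangerous configuration is $v_i<c<w$. The resolution is the least-label property built into the definition of the corner, which supplies the missing $+1$. A minor bookkeeping point I would state explicitly is that the normalization of $h_i$ (minimum value $0$) differs from the restriction of $h$ only by an additive constant, so it affects neither the corner nor the order $O_{h_i}$, justifying the identification of the corner of $\Phi_i$ with the $O_h$-minimal vertex $w$.
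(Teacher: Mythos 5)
Your proof is correct and takes essentially the same route as the paper's: both rest on the observation that when $a+b\in\{0,1\}$ any vertex of $\Phi_i$ preceding $v_i$ in the order $O_h$ --- in particular the corner --- must itself be adjacent to $c$ in $K^{ab}_n[h]$ (your explicit thresholds $H+a$ and $H-b$, differing by $a+b$, are just a restated form of the paper's remark that the presence of a gain $g$ forces all gains in $[-|g|+1,|g|]$ to be present), and both conclude via Proposition \ref{david}. The only difference is packaging: you argue directly that the corner $w$ is adjacent and appeal to the minimality of $v_i$, whereas the paper supposes $v_i$ is not the corner and exhibits a broken circuit; your careful handling of the asymmetric configuration $v_i<c<w$ when $a+b=1$ makes explicit a point the paper glosses over.
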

\begin{proof}
In the case where $a+b=0$, the interval $[a,b]$ is of the form $[-b,b]$ where $b$ is a nonnegative integer. Similarly in the case
where $a+b=1$ the interval $[a,b]$ is of the form $[-b,b+1]$ where $b$ is a positive integer. Therefore whenever 
a gain $g$ is present in the graph it implies that all gains in the interval $[-|g|+1,|g|]$ also exists in the
graph. 
Therefore, if there exists a vertex $v$  in $\Phi_i$   with $h(v)>h(v_i)$ then the edge $(c,v)$ necessarily exists  in $ K^{ab}_n[h]$. In the case $h(v)=h(v_i)$ and $v<v_i$, the edge $(c,v)$ also necessarily exists  
in $ K^{ab}_n[h]$.

Let us suppose that $v_i$ is not the corner of its tree. Then there exists $v$ such that $h(v)>h(v_i)$ or $h(v)=h(v_i)$ and $v<v_i$. By taking the edge $(c,v_i)$ along with the unique path  $P(v_i,v)$ 
in this subtree we get a path $P$ which is a broken circuit of $K_n^{ab}[h]$ (because the edge $(c,v)$ is smaller in the order $O_h$ than all the edges of $P$) and this contradicts the fact that 
the tree $\Phi$ is an NBC tree of $K_n^{ab}$. Using Proposition \ref{david}, we get a contradiction and $v_i$ has to be the cornerof $\Phi_i$. 
\end{proof}

Note that this will not be true as soon as $a+b=2$ as in the Linial case.
We now introduce our family of trees.

%As mentionned in \ref{cas simple}, the simplest cases are when we have
% $a+b=0$ or 1. 
%We give here an alternative proof of this result by exhibiting a bijection between
%$(a,b)$-NBC tree and $(a,b)$-labelled trees, that we define now~:

\begin{defini}
Let $\alpha$ and $\beta$ be natural integers (including 0).  An {\it $(\alpha,\beta)$-rooted labelled tree} with $n$ vertices is a rooted, labelled and weighted tree on the set of vertices  $[n]$,
 such that each edge of the tree, $(i,j)$ where $i$ is the parent
and $j$ the child, is weighted with an integer from
\begin{itemize}
\item the interval $[1,\alpha]$ if $i<j$ and 
\item the interval $[1,\beta]$ if $i>j$.
\end{itemize}
\end{defini}

Note that if one of the integers $\alpha$ or $\beta$ is equal to 0 then 
the corresponding interval is empty. This  just implies that such edges cannot exist. 
In the next theorem we go from the NBC trees of $K_n^{ab}$ to $(\alpha,\beta)$-trees
by cutting the interval $[a,b]$ in two parts : the part $[a,0]$ of the negative 
or null gains will correspond to $\alpha$ and the part $[1,b]$ of the positive gains will correspond to $\beta$. 

\begin{thm}
If $b+a=0$ or $b+a=1$, the NBC trees of $K_n^{ab}$  are in bijection with the $(1-a,b)$-trees on $[n]$.
\label{aa}
\end{thm}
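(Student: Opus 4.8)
The plan is to build the bijection recursively by induction on the number of vertices, peeling off the corner at each step; I would prove it for NBC trees of $K^{ab}_V$ on any finite set $V\subset\bbZ$ (the notion of $(1-a,b)$-tree refers only to the order of the labels), so that subtrees feed straight back into the induction. Throughout, $a+b\in\{0,1\}$ is used through the resulting sign constraints $a\le 0\le b$. \emph{Forward.} Let $\Phi$ be an NBC tree with height function $h=h_\Phi$ and corner $c$; root the image at $c$. By the decomposition lemma of Section \ref{nbc-semi}, $\Phi\setminus c=\Phi_1\uplus\cdots\uplus\Phi_k$ with each $\Phi_i$ an NBC subtree, and by Proposition \ref{david} together with Lemma \ref{lemmetec} the neighbour $v_i$ of $c$ in $\Phi_i$ is exactly the corner of $\Phi_i$. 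Recurse on each $\Phi_i$ to get a $(1-a,b)$-tree rooted at $v_i$, and attach it to $c$. It remains to weight the edge $c\to v_i$. As $c$ is the corner, $h(c)\ge h(v_i)$: if $c<v_i$ the coherent gain is $g=h(v_i)-h(c)\in[a,0]$ (exactly $1-a$ values), and I set the weight to $g-a+1\in[1,1-a]$; if $c>v_i$ then $h(v_i)<h(c)$ strictly (else $v_i$ would be a smaller vertex of greatest height), the coherent lower-to-higher gain is $g'=h(c)-h(v_i)\in[1,b]$, and I set the weight to $g'\in[1,b]$. This is the $(1-a,b)$-rule, and the matches $\alpha=1-a$, $\beta=b$ are forced by $a\le0\le b$.

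\emph{Inverse.} Given a $(1-a,b)$-tree $T$ rooted at $r$, reverse the rule: an ascending edge $c\to v$ of weight $w$ becomes the coherent edge of gain $w+a-1\in[a,0]$, and a descending edge $c\to v$ of weight $w$ becomes the coherent edge with gain $w\in[1,b]\subseteq[a,b]$ in the orientation from the smaller to the larger endpoint. A tree has no circles, so it is balanced and these gains determine a unique height function $h_T$ with $h_T^{-1}(0)\neq\eset$, obtained by placing $r$ at the top and propagating the differences downward. Two things must be checked. First, $r$ is the corner: along every parent-to-child edge the height weakly decreases, and it stays equal only on an ascending edge, where the parent is the smaller label; hence $h_T$ is maximised at $r$, and any other vertex of maximal height is joined to $r$ by a path of equalities, i.e.\ of strictly increasing labels, so $r$ is the smallest vertex of greatest height. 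Second, $T$ is NBC, which I get from Proposition \ref{david} by induction: the child subtrees are NBC with corners at their roots by the induction hypothesis, and each $v_i$ is trivially the $O_{h_T}$-smallest vertex of its subtree adjacent to $r$, because the corner of a subtree is its $O_{h_T}$-minimal vertex and $O_{h_T}$ restricts to the subtree's own order. The two maps invert each other since the data at the corner — the $v_i$, their subtrees, and the weights — are recovered verbatim.

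The step I expect to be the main obstacle is the inverse direction: verifying that the height function propagated from an abstract weighted tree really has its root as corner and really satisfies the $O_h$-minimality hypothesis of Proposition \ref{david}. Both rest on the sign/strictness dichotomy between ascending and descending edges, and this is exactly where $a+b\in\{0,1\}$ is used twice — through Lemma \ref{lemmetec}, so that the $v_i$ are corners and the recursion closes, and through the cardinalities $|[a,0]|=1-a=\alpha$ and $|[1,b]|=b=\beta$, which make the weighting a bijection edge by edge. As the remark following Lemma \ref{lemmetec} notes, this matching breaks once $a+b\ge2$ (the Linial case), which is the precise reason for restricting to $a+b\in\{0,1\}$.
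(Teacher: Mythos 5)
Your proposal is correct and takes essentially the same route as the paper: peel off the corner, invoke Proposition~\ref{david} together with Lemma~\ref{lemmetec} to conclude that each neighbour $v_i$ of the corner is the corner of its component, and recurse, with the gain-to-weight dictionary $[a,0]\to[1,1-a]$ and $[1,b]\to[1,b]$ matching the $(1-a,b)$-tree definition. Your only deviations are cosmetic or additive: you use the order-preserving normalisation $g\mapsto g-a+1$ on non-positive gains where the paper uses $g\mapsto 1-g$ (both are bijections from $[a,0]$ to $[1,1-a]$, so only the labelling of weights differs), and you spell out the inverse map --- propagating heights from the root and checking that the root is the corner and that the $O_h$-minimality hypothesis of Proposition~\ref{david} holds --- which the paper leaves implicit.
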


\begin{proof}
We recursively decompose the NBC trees of $K_n^{ab}$. 
Let $\Phi$ be an NBC tree. Let $c$ be its corner
and let $c_1,c_2,\ldots,c_k$ be the neighbors of $c$ with gains
$g_1,g_2,\ldots , g_k$. 
We now construct a corresponding $(1-a,b)$-tree. The root of the $(1-a,b)$-tree is $c$,
$c_1,c_2,\ldots ,c_k$ are its children and the edges from $c$ to $c_i$
get the label $g_i$ if it is strictly positive and $1-g_i$ otherwise. 
The decomposition continues recursively on the trees with corners $c_1,c_2,\ldots, c_k$.

When we take out the vertex $c$ from $\Phi$, we get a forest
of NBC trees, where each $c_i$ is in a different tree. 
To prove that the decomposition is correct, we use Lemma \ref{lemmetec} and we know that 
each $c_i$ is the corner of its component.
\end{proof}

A direct consequence of our Theorem \ref{aa} is that~:

\begin{cor}
If $b+a=0$ or $b+a=1$, the number of regions of $\cA[K_n^{ab}]$ is equal to the number
of $(1-a,b)$-rooted labelled forests with $n$ vertices.
\end{cor}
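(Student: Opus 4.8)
The plan is to reduce the statement to Theorem \ref{aa} by passing from single trees to forests. By the matroid/semimatroid facts recalled in Section \ref{nbc-semi} (see \cite[Section 9]{PS}), the number of regions of $\cA[K_n^{ab}]$ equals $|\cN_O(K_n^{ab})|$, and the NBC sets of the frame semimatroid are precisely the balanced spanning forests of $K_n^{ab}$ that contain no broken circuit. So it suffices to construct a bijection between these NBC spanning forests and the $(1-a,b)$-rooted labelled forests on $[n]$, and then compare cardinalities.

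First I would decompose an NBC spanning forest $F$ into its connected components $F_1,\ldots,F_m$, whose vertex sets $S_1,\ldots,S_m$ form a set-partition of $[n]$. Each $F_j$ is a spanning tree of the induced gain graph $K_{S_j}^{ab}$, hence balanced, so it carries its own height function $h_{F_j}$ and corner. The key observation is that NBC-ness decomposes over components: a broken circuit is a balanced circle minus its smallest edge, hence a path, hence connected, so it lies inside a single component, and the circle closing it uses only vertices of that component. Therefore $F$ contains no broken circuit if and only if each $F_j$ is an NBC tree of $K_{S_j}^{ab}$. This is the forest analogue of the decomposition $\cN_O(\Phi)=\biguplus_h \cN_O(\Phi[h])$ of Section \ref{nbc-semi}, now taken across components each with its individual height function.

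Next I would apply Theorem \ref{aa} to each component. The bijection there is built recursively by repeatedly removing the corner (via Proposition \ref{david} and Lemma \ref{lemmetec}), and at every step it uses only the induced gain graph and the induced height-function order on the vertices still present; consequently it applies verbatim with $[n]$ replaced by the block $S_j$, sending $F_j$ to a $(1-a,b)$-rooted labelled tree $T_j$ on $S_j$, rooted at the corner of $F_j$. Conversely, a $(1-a,b)$-rooted labelled forest on $[n]$ is by definition nothing but a disjoint union of $(1-a,b)$-rooted labelled trees indexed by a set-partition of $[n]$. Hence $F\mapsto (T_1,\ldots,T_m)$, read as their disjoint union, is the desired bijection, and comparing cardinalities yields the Corollary.

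The step I expect to require the most care is the bookkeeping of orders. Theorem \ref{aa} is phrased with the single height-function order $O_h$ of a connected balanced graph, whereas a forest has no global height function; I must justify that the per-component orders $O_{h_{F_j}}$ are exactly the restrictions of a coherent order on $K_n^{ab}$, and that the order-independence of $|\cN_O(K_n^{ab})|$ legitimizes counting each component with its own $O_{h_{F_j}}$. The unlabelled Lemma opening Section \ref{nbc-semi} --- that the orders $O_{h_i}$ obtained after removing a corner are restrictions of $O_h$ --- is precisely the tool that makes this compatible, so the remaining work is to check that this restriction property propagates through the full component decomposition rather than through a single corner removal.
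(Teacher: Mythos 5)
Your proposal is correct and takes essentially the same route as the paper: the paper's own (very brief) proof invokes exactly the two facts you use, namely that the number of regions equals the number of NBC sets (Proposition 9.4 of \cite{PS}) and that an NBC set is a union of NBC trees, to which Theorem \ref{aa} is applied componentwise. Your additional bookkeeping --- the componentwise decomposition over the set-partition of $[n]$, the connectedness of broken circuits, and the compatibility of the per-component orders $O_{h_{F_j}}$ --- is a sound expansion of details the paper leaves implicit.
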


\begin{proof}
To get this consequence from the previous theorem, we use the facts that 
for any affine hyperplane arrangement the number 
of NBC sets is equal to the number of regions 
%\comment{CITATION needed for the affine case, which is not well known} 
and that an NBC set is a union of NBC trees. See Proposition 9.4 of \cite{PS}.
\end{proof}

Following \cite{PS}, section 4, we can refine our result. The Poincar\'e polynomial ${\rm Poin_{\mathcal A}}(q)$
of an arrangement ${\mathcal A}$ is a $q$-analogue of the number of regions of the hyperplane arrangement.
Thanks to the NBC theorem (Theorem 4.5 in \cite{PS}), it is also a $q$-analogue of the
number of NBC forests. To be precise if $P_{n,j}$ is the number of NCB forests with $j$ edges, we have
\begin{equation}
 {\rm Poin_{\mathcal A}}(q)=\sum_{j\ge 1}P_{n,j}q^{j}
 \label{poincare}
\end{equation}
The characteristic polynomial ${\rm \chi_{\mathcal A}}(q)$ of the hyperplane arrangement
${\mathcal A}$ is such that~:
$$
{\chi_{\mathcal A}}(q)=q^n{\rm Poin_{\mathcal A}}(-1/q)
$$
Let $f_{n,j}$ be the number of $(1-a,b)$-labelled forests with $n$ vertices and $j$ trees. We define
the generating polynomial $F_{n,1-a,b}(q)=\sum_{j}(-1)^{n-j}f_{n,j} q^{j-1}$.
We then get 
\begin{cor}
If $b+a=0$ or $b+a=1$, the characteristic polynomial ${\rm \chi_{\mathcal A}}(q)$ of $\cA[K_n^{ab}]$ is equal to $F_{n,1-a,b}(q)$ the generating polynomial
of $(1-a,b)$-rooted labelled forests with $n$ vertices.
\end{cor}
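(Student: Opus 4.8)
The plan is to upgrade the tree bijection of Theorem \ref{aa} to a forest bijection and then feed the resulting enumeration into the characteristic-polynomial formula (\ref{poincare}). First I would extend Theorem \ref{aa} from spanning NBC trees to (not necessarily spanning) NBC forests of $K_n^{ab}$. By the decomposition lemma of Section \ref{nbc-semi} together with Lemma \ref{lemmetec}, an NBC forest is a disjoint union of NBC subtrees sitting on the blocks of a partition of $[n]$, and on each block the order $O_h$ restricts correctly while the corner of the block is determined intrinsically. Applying the tree bijection of Theorem \ref{aa} blockwise therefore yields a bijection between NBC forests of $K_n^{ab}$ and $(1-a,b)$-labelled forests on $[n]$. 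Because this bijection is vertex-set preserving on each component, a subtree on $m$ vertices is sent to a $(1-a,b)$-tree on $m$ vertices; summing over components shows that the bijection preserves both the number of edges and the number of components.

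Consequently, an NBC forest with $j$ edges corresponds to a $(1-a,b)$-forest with $n-j$ trees, which gives the key numerical identity
\[
P_{n,j} = f_{n,\,n-j}.
\]
Next I would substitute this into the expression for the characteristic polynomial. By the NBC theorem and (\ref{poincare}), $\mathrm{Poin}_{\mathcal A}(q)=\sum_j P_{n,j}q^{j}$, so that $\chi_{\mathcal A}(q)=q^{n}\mathrm{Poin}_{\mathcal A}(-1/q)=\sum_j (-1)^{j}P_{n,j}q^{\,n-j}$, up to the rank normalization discussed below. Reindexing by $k=n-j$ and inserting $P_{n,n-k}=f_{n,k}$ turns this into
\[
\chi_{\mathcal A}(q)=\sum_k (-1)^{\,n-k} f_{n,k}\, q^{\,k-1},
\]
which is exactly the definition of $F_{n,1-a,b}(q)$, completing the proof.

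The signed reindexing is routine; the two places that need genuine care are the blockwise bijection and the normalization of $\chi_{\mathcal A}$. For the former, I must check that the corner and the induced order $O_{h_i}$ used when the tree bijection is applied to a block coincide with the restrictions coming from the ambient height function, which is precisely what the decomposition lemma and Lemma \ref{lemmetec} guarantee, so that the blockwise images glue into a genuine $(1-a,b)$-forest and the correspondence is reversible. For the latter, the exponent $q^{k-1}$ (rather than $q^{k}$) in $F_{n,1-a,b}$ reflects the rank of the arrangement: the hyperplanes $x_j-x_i=g$ are invariant under translation along the all-ones direction, so $\cA[K_n^{ab}]$ has rank $n-1$, and the factor of $q$ coming from this one-dimensional lineality is what reconciles the $q^{n}\mathrm{Poin}_{\mathcal A}(-1/q)$ normalization with the degree-$(n-1)$ polynomial $F_{n,1-a,b}$. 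Pinning down this normalization so that the leading and trailing coefficients agree on both sides is the main obstacle.
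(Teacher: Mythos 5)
Your proposal is correct and follows essentially the same route as the paper, which obtains this corollary with no further argument beyond applying Theorem \ref{aa} componentwise to NBC forests (as in the preceding corollary: ``an NBC set is a union of NBC trees''), giving $P_{n,j}=f_{n,n-j}$ and substituting into \eqref{poincare} and the definition of $F_{n,1-a,b}(q)$. You even correctly diagnose the one point the paper glosses over: its stated relation $\chi_{\mathcal A}(q)=q^{n}\mathrm{Poin}_{\mathcal A}(-1/q)$ literally yields $q\,F_{n,1-a,b}(q)$, and the degree-$(n-1)$ formulas of Theorem \ref{thmps} confirm that the intended convention is the essentialized (rank-$(n-1)$) characteristic polynomial, exactly as your lineality remark explains.
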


We will now give an alternative proof of Theorem 9.8 and Example 9.10.2 of \cite{PS}.
Note that our results look different from those in \cite{PS}, as Postnikov and Stanley look at the hyperplane arrangements
defined by $x_i-x_j=g$ with $g\in[-a+1,b-1]$  and such that $a\ge0$ and $b\ge -a+2$. We chose $x_i-x_j=g$ with $g\in[a,b]$
and such that $b\ge a$, $b \ge 0$ and $a\le 1$.
\begin{thm}
The characteristic polynomial ${\rm \chi_{\mathcal A}}(q)$ of $\cA[K_n^{ab}]$ is
\begin{align*}
&(-1)^{n-1}(bn-q+1)(bn-q+2)\ldots (bn-q+n-1-q), 	\quad &\text{ if } \ a+b=0,
\intertext{and}
&(-1)^{n-1}(bn-q)^{n-1}, 	\quad &\text{ if } \ a+b=1.
\end{align*}
\label{thmps}
\end{thm}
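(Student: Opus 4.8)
The plan is to compute $F_{n,1-a,b}(q)$ directly and match it to the two claimed products. By the Corollary just proved, $\chi_{\cA[K_n^{ab}]}(q)=F_{n,1-a,b}(q)=\sum_{j\ge1}(-1)^{n-j}f_{n,j}\,q^{j-1}$, where $f_{n,j}$ is the number of $(1-a,b)$-rooted labelled forests on $[n]$ with $j$ trees. Setting $\alpha=1-a$ and $\beta=b$, everything reduces to one enumerative identity for the forest generating function,
\[
\sum_{j\ge1}f_{n,j}\,t^{j}=t\prod_{i=1}^{n-1}\bigl(t+i\alpha+(n-i)\beta\bigr).
\]
Granting this, the theorem is pure bookkeeping: replacing $t$ by $-q$, dividing by $q$, and multiplying by $(-1)^{n}$ turns the right-hand side into $F_{n,\alpha,\beta}(q)=(-1)^{n-1}\prod_{i=1}^{n-1}\bigl(i\alpha+(n-i)\beta-q\bigr)$.

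Only the factors $i\alpha+(n-i)\beta$ then need to be specialized. If $a+b=1$ then $\alpha=\beta=b$, every factor equals $bn-q$, and the product is $(-1)^{n-1}(bn-q)^{n-1}$. If $a+b=0$ then $\alpha=b+1$ and $\beta=b$, so $i\alpha+(n-i)\beta=bn+i$ and the product becomes $(-1)^{n-1}\prod_{i=1}^{n-1}(bn+i-q)=(-1)^{n-1}(bn-q+1)(bn-q+2)\cdots(bn-q+n-1)$. This also pins down the final factor as $bn-q+n-1$; the $bn-q+n-1-q$ in the statement is a typographical slip.

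The substantive content is the forest identity, and proving it is where I expect the difficulty to concentrate. A convenient reformulation is to adjoin a vertex $0$ smaller than all of $[n]$ and read a forest with $j$ trees as a tree on $\{0,1,\dots,n\}$ rooted at $0$ whose $j$ edges out of $0$ are marked by $q$; the identity then says that the total weight of such trees, with every edge out of $0$ carrying weight $q$ and every other edge carrying its $(\alpha,\beta)$-weight, equals $q\prod_{i=1}^{n-1}(q+i\alpha+(n-i)\beta)$. This is a weighted Cayley-type product formula, of which the single-tree case $t_n=\prod_{i=1}^{n-1}(i\alpha+(n-i)\beta)$ is the free (unmarked) analogue. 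The obstacle is that the weight of a tree is not a function of its shape alone: the edge from a parent to a child counts $\alpha$ possibilities when the parent is the smaller label and $\beta$ when it is the larger, so neither the naive exponential-generating-function factorization into rooted subtrees nor a plain coalescent edge-growth count separates cleanly. I would prove the product formula by induction, deleting the globally smallest non-root vertex (or, equivalently, by a Lagrange-inversion argument on the tree generating function), arranging the induction or bijection so that the number of admissible labels available at the vertex of rank $i$ is exactly $i\alpha+(n-i)\beta$.

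As a consistency check, the case $\alpha=\beta=b$ admits an entirely elementary treatment that sidesteps the general identity: every one of the $n-j$ edges of a forest then carries $b$ labels irrespective of orientation, so $f_{n,j}=b^{\,n-j}\binom{n-1}{j-1}n^{\,n-j}$ by the classical count of rooted forests with a prescribed number of trees, and the binomial theorem gives $\sum_{j}(-1)^{n-j}f_{n,j}q^{j-1}=(q-bn)^{n-1}$ immediately. No such shortcut is available when $a+b=0$, since there the weights $\alpha=b+1\neq\beta=b$ genuinely depend on orientation; this is precisely why the orientation-sensitive forest enumeration cannot be avoided in general.
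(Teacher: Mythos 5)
Your reduction is the right one and most of your bookkeeping checks out: the theorem is indeed equivalent, via the corollary $\chi_{\cA[K_n^{ab}]}(q)=F_{n,1-a,b}(q)$, to the single product formula $F_{n,\alpha,\beta}(q)=(-1)^{n-1}\prod_{i=1}^{n-1}\bigl(n\beta-q+(\alpha-\beta)i\bigr)$ with $\alpha=1-a$, $\beta=b$; your factors $i\alpha+(n-i)\beta$ are the same quantity rewritten; the specializations ($\alpha=\beta=b$ when $a+b=1$, and $\alpha=b+1$, $\beta=b$ when $a+b=0$) are correct; and you rightly identify the stated final factor $bn-q+n-1-q$ as a typo for $bn-q+n-1$ (the corollary at $q=-1$ confirms this). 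Your elementary treatment of the $a+b=1$ case, via $f_{n,j}=b^{\,n-j}\binom{n-1}{j-1}n^{\,n-j}$ and the binomial theorem, is complete and correct, and in fact isolates nicely why that case is easy.

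The gap is the $a+b=0$ case, which rests entirely on the forest product identity that you explicitly defer: ``I would prove the product formula by induction, deleting the globally smallest non-root vertex, \dots arranging the induction or bijection so that the number of admissible labels available at the vertex of rank $i$ is exactly $i\alpha+(n-i)\beta$'' is a plan, not an argument, and it is not obvious it works as stated -- the smallest non-root vertex may be internal, and splicing its children to its parent changes which edges are parent-increasing versus parent-decreasing, so the weight does not transform in an evident way under that deletion. This identity is precisely the substantive content of the paper's Proposition, proved there by a different and fully explicit device: assuming $\alpha\ge\beta$, split the edges of a forest by label into the high group $[\beta+1,\alpha]$, which can only occur on edges whose parent--child pair has a fixed relative order, so that the $k$ high edges form a forest counted by $|s(n,n-k)|(\alpha-\beta)^k$ with $s$ the Stirling number of the first kind, and the low group $[1,\beta]$, which is orientation-free and contributes $(n\beta)^{n-k-j}\binom{n-k-1}{j-1}$ by the classical rooted-forest count; summing over $k$ with the generating function $\sum_k|s(n,n-k)|x^{n-k}=\prod_{i=0}^{n-1}(x+i)$ then yields $F_{n,\alpha,\beta}(q)=(-1)^{n-1}\prod_{i=1}^{n-1}\bigl(n\beta-q+(\alpha-\beta)i\bigr)$. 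To complete your proof you would need to carry out your induction or Lagrange-inversion sketch in detail, reproduce this label-splitting argument, or cite an existing weighted Cayley formula of Gessel--Seo or E\u{g}ecio\u{g}lu--Remmel type (both are in the paper's bibliography); as written, the harder half of the theorem is asserted rather than proved.
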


Setting $q=-1$ and taking the absolute value, we get that~: 
\begin{cor}
The number of regions of $\cA[K_n^{ab}]$ is
\begin{align*}
&(bn+2)(bn+3)\ldots (bn+n), 	\quad &\text{ if } \ a+b=0,
\intertext{and}
&(bn+1)^{n-1}, 	\quad &\text{ if } \ a+b=1.
\end{align*}
\end{cor}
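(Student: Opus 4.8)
The plan is to deduce the corollary directly from Theorem \ref{thmps} by specializing the characteristic polynomial at $q=-1$. The only conceptual ingredient is the classical fact that, for a real affine arrangement, the number of regions is recovered from the characteristic polynomial by evaluation at $-1$ up to sign. Since $\cA[K_n^{ab}]$ has a one–dimensional lineality space (the all–ones direction), the polynomial $\chi_{\mathcal A}(q)$ of Theorem \ref{thmps} is the rank–$(n-1)$, essentialized characteristic polynomial, and the number of regions equals $(-1)^{n-1}\chi_{\mathcal A}(-1)$. Because this count is positive, that quantity coincides with $|\chi_{\mathcal A}(-1)|$, which is exactly the operation announced in the lead–in sentence.

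First I would justify the evaluation formula internally, using the identities already assembled in the paper rather than citing outside machinery. By the corollary preceding Theorem \ref{thmps} we have $\chi_{\mathcal A}(q)=F_{n,1-a,b}(q)=\sum_j (-1)^{n-j} f_{n,j}\,q^{j-1}$, so that
\[
\chi_{\mathcal A}(-1)=\sum_j (-1)^{n-j} f_{n,j}\,(-1)^{j-1}=(-1)^{n-1}\sum_j f_{n,j}.
\]
The sum $\sum_j f_{n,j}$ is the total number of $(1-a,b)$–rooted labelled forests on $[n]$, which by the earlier corollary equals the number of regions of $\cA[K_n^{ab}]$. Hence $(-1)^{n-1}\chi_{\mathcal A}(-1)$ is precisely this region count, confirming that the number of regions is $|\chi_{\mathcal A}(-1)|$. (One could equally invoke Zaslavsky's theorem.)

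It then remains to substitute $q=-1$ into the two explicit formulas of Theorem \ref{thmps}. When $a+b=0$, each factor $bn-q+k$ of the product becomes $bn+1+k$, so as $k$ runs over $1,\ldots,n-1$ the factors are $bn+2,bn+3,\ldots,bn+n$; thus $\chi_{\mathcal A}(-1)=(-1)^{n-1}(bn+2)(bn+3)\ldots(bn+n)$, and the absolute value is the stated product. When $a+b=1$, the factor $bn-q$ becomes $bn+1$, giving $\chi_{\mathcal A}(-1)=(-1)^{n-1}(bn+1)^{n-1}$, whose absolute value is $(bn+1)^{n-1}$.

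I do not expect any genuine obstacle here: Theorem \ref{thmps} already carries all the weight, and what remains is sign bookkeeping. The single point requiring care is the parity factor $(-1)^{n-1}$ coming from the rank of the essentialized arrangement; one must check that it cancels exactly against the sign of $\chi_{\mathcal A}(-1)$, so that passing to absolute values (rather than multiplying by $(-1)^{n}$, which would use the ambient dimension $n$ and produce the wrong sign) yields the correct positive count.
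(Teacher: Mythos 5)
Your proposal is correct and matches the paper's own proof, which consists precisely of setting $q=-1$ in Theorem \ref{thmps} and taking the absolute value. Your extra derivation of the sign — checking via $F_{n,1-a,b}(q)=\sum_j(-1)^{n-j}f_{n,j}q^{j-1}$ that $(-1)^{n-1}\chi_{\mathcal A}(-1)=\sum_j f_{n,j}$ is the region count, so the parity factor is $(-1)^{n-1}$ (rank of the essentialized arrangement) rather than $(-1)^n$ — merely makes explicit what the paper leaves implicit, and is consistent with its conventions.
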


To finish our proof of Theorem \ref{thmps}, we have to count the $(\alpha,\beta)$-labelled trees and $(\alpha,\beta)$-labelled forests. More general results on the enumeration of labelled trees can be found in \cite{GS,E}.

\begin{prop}
The number of $(\alpha,\beta)$-rooted labelled trees with $n$ vertices is
$$
\prod_{i=1}^{n-1}[n\beta+(\alpha-\beta)i].
$$
The generating polynomial $F_{n,\alpha,\beta}(q)$ of $(\alpha,\beta)$-rooted labelled forests with $n$ vertices is
$$
(-1)^{n-1}\prod_{i=1}^{n-1}[n\beta-q+(\alpha-\beta)i].
$$
\end{prop}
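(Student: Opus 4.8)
The plan is to derive both formulas at once from a single determinant, via the weighted directed matrix--tree theorem. First I would recast the objects to be counted: since an edge $(i,j)$ with $i$ the parent admits $\alpha$ weights when $i<j$ and $\beta$ weights when $i>j$, the number of $(\alpha,\beta)$-trees is $\sum_T \alpha^{a(T)}\beta^{d(T)}$, summed over all rooted labelled trees $T$ on $[n]$, where $a(T)$ counts edges pointing from a smaller to a larger label and $d(T)$ the reverse; similarly $f_{n,j}$ is the same weighted sum over rooted forests with $j$ components. I would then introduce the $n\times n$ matrix $M$ with $M_{ii}=(i-1)\alpha+(n-i)\beta$, $M_{ij}=-\alpha$ for $j<i$, and $M_{ij}=-\beta$ for $j>i$. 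This is precisely the Laplacian of the complete digraph on $[n]$ in which the (parent-to-child) edge from $j$ to $i$ has weight $\alpha$ if $j<i$ and $\beta$ if $j>i$; in particular $M$ has zero row sums.

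The engine is the forest form of the matrix--tree theorem. Adjoining a new root vertex joined to each $i\in[n]$ by an edge of weight $t$ and counting out-arborescences from it gives
\[
\det(tI+M)=\sum_{F}w(F)\,t^{c(F)}=\sum_{j=1}^{n}f_{n,j}\,t^{j},
\]
where $F$ runs over rooted out-forests on $[n]$, $c(F)$ is the number of components and $w(F)$ the product of edge weights. Thus all the data of the proposition sit in the polynomial $\det(tI+M)$: the tree count is its coefficient of $t^{1}$, and the forest polynomial is read off from the whole expansion.

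The main step, and the only computation requiring care, is to show that, writing $\mu_i:=i\alpha+(n-i)\beta$,
\[
\det(tI+M)=t\prod_{i=1}^{n-1}\bigl(t+\mu_i\bigr).
\]
I would do this by elementary operations. Because all sub-diagonal entries equal $-\alpha$ and all super-diagonal entries equal $-\beta$, consecutive rows agree except in two columns, so subtracting row $i+1$ from row $i$ for $i=1,\dots,n-1$ (a determinant-preserving unipotent step) turns row $i$ into $(t+\mu_i)(\mathbf e_i-\mathbf e_{i+1})$. Factoring $t+\mu_i$ out of each of the first $n-1$ rows leaves rows $\mathbf e_i-\mathbf e_{i+1}$ on top and the unchanged last row $(-\alpha,\dots,-\alpha,\,t+(n-1)\alpha)$; adding all other columns into the last one replaces each last-column entry by its row sum, namely $0$ in the first $n-1$ rows and $t$ in the last, and expanding along this column against the unit upper-bidiagonal block (determinant $1$) produces the factor $t$. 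This gives the displayed product.

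Finally I would extract the two statements. The coefficient of $t^{1}$ yields
\[
f_{n,1}=\prod_{i=1}^{n-1}\mu_i=\prod_{i=1}^{n-1}\bigl(n\beta+(\alpha-\beta)i\bigr),
\]
the tree formula. For the forests, substituting $t=-q$ and using $(-1)^{n+j}=(-1)^{n-j}$ gives
\[
F_{n,\alpha,\beta}(q)=\frac{(-1)^{n}}{q}\,\det(-qI+M)=(-1)^{n-1}\prod_{i=1}^{n-1}\bigl(n\beta-q+(\alpha-\beta)i\bigr).
\]
The only genuine obstacle is fixing the orientation convention in the matrix--tree theorem so that $M$ encodes exactly the parent-to-child weighting; the determinant evaluation itself is routine once the row-difference trick is spotted, and as a reassurance the product $\prod(t+\mu_i)$ is symmetric under $\alpha\leftrightarrow\beta$, matching the label-reversal symmetry that exchanges ascending and descending edges.
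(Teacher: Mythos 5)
Your proof is correct, but it takes a genuinely different route from the paper's. The paper argues combinatorially: assuming without loss of generality $\alpha\ge\beta$, it splits the edges of an $(\alpha,\beta)$-forest according to whether the label lies in $[\beta+1,\alpha]$ or in $[1,\beta]$; the first group forms a decreasing forest with $k$ edges, counted by $|s(n,n-k)|(\alpha-\beta)^k$ via Stirling numbers of the first kind, the second a rooted forest whose edges carry one of $\beta$ labels, with generating polynomial $(n\beta)^{n-k-j}\binom{n-k-1}{j-1}q^{j-1}$, and the product formula then drops out of the identity $\sum_k |s(n,n-k)|\,x^{n-k}=\prod_{i=0}^{n-1}(x+i)$. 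You instead encode the ascending/descending edge multiplicities $\alpha,\beta$ into the (in-degree) Laplacian $M$ of a weighted complete digraph and evaluate $\det(tI+M)=t\prod_{i=1}^{n-1}\bigl(t+i\alpha+(n-i)\beta\bigr)$ by row operations, reading off the tree count as the coefficient of $t$ and the forest polynomial via $t=-q$. Your details check out: $M$ is the transpose of the usual convention, which is harmless since determinants and principal minors are transpose-invariant; the consecutive row subtractions (performed in order $i=1,\dots,n-1$, so each subtracted row is still unmodified) do produce $(t+\mu_i)(\mathbf e_i-\mathbf e_{i+1})$ with $\mu_i=i\alpha+(n-i)\beta=n\beta+(\alpha-\beta)i$; and the sign bookkeeping $F_{n,\alpha,\beta}(q)=\frac{(-1)^n}{q}\det(-qI+M)$ matches the paper's definition $\sum_j(-1)^{n-j}f_{n,j}q^{j-1}$. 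As for what each approach buys: your determinant treats $\alpha\ge\beta$ and $\alpha<\beta$ uniformly (the paper must reduce to one case by symmetry), delivers trees and forests in a single computation, and needs no prior enumerative facts beyond the all-minors matrix--tree theorem; the paper's split is more elementary and bijective in spirit, in keeping with its overall theme, and explains the individual factors of the product combinatorially through the Stirling numbers, at the cost of quoting the known counts of decreasing forests and of $\beta$-labelled rooted forests and of a slightly more delicate summation over the splitting parameter $k$.
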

\begin{proof}
We suppose that $\alpha\ge \beta$. The other case is analogous. 
We  enumerate $(\alpha,\beta)$-rooted labelled forests. The statement on trees is straighforward by setting $q=0$. 
We suppose that the forest has $j$ trees, i.e., $n-j$ edges.
We split the edges of the trees into two groups~:
\begin{itemize}
\item The edges with labels $\beta+1,\ldots ,\alpha$.
\item The edges with labels $1,2,\ldots ,\beta$. 
\end{itemize}
Suppose that the first group has $k$ edges. They form a decreasing forest
on $n$ vertices with $k$ edges, such that the edges can have $(\alpha-\beta)$
different labels. The number of such forests is well known to be $|s(n,n-k)|(\alpha-\beta)^k$ where
$s(n,k)$ is the Stirling number of the first kind.

The second group is a rooted labelled  forest on $n$ vertices with $n-k-j$ edges, such that the edges can have $\beta$ different labels. 
The two groups have disjoint edges
and the $j$ non-existing edges are also disjoint. 
Therefore the generating polynomial of such forests on $n$ vertices is  $(n\beta)^{n-k-j}{n-k-1\choose j-1}q^{j-1}$.
As the forest has in total $n-j$ edges, we also need the sign $(-1)^{n-j}$.

Therefore, we deduce that the generating polynomial $F_{n,\alpha,\beta}(q)$  of $(\alpha,\beta)$-rooted forest trees with $n$ vertices and $k$ edges in the first group
is~:
$$
|s(n,n-k)|(\alpha-\beta)^k\sum_{j\ge 1}(1)^{n-j}(n\beta )^{n-k-j}{n-k-1\choose j-1}q^{j-1}=(-1)^{n-1}|s(n,n-k)|(\beta-\alpha)^k(n\beta-q)^{n-k-1}.
$$
Therefore the generating polynomial $F_{n,\alpha,\beta}(q)$ is~:
\begin{align*}
&(-1)^{n-1}\sum_{k=0}^{n-1} |s(n,n-k)|(\alpha-\beta)^k(n\beta-q)^{n-k-1}\\
&=(-1)^{n-1}\frac{(\alpha-\beta)^n}{n\beta}\sum_{k=0}^{n} |s(n,n-k)|\left(\frac{n\beta-q}{\alpha-\beta}\right)^{n-k}\\
&=(-1)^{n-1}\frac{(\alpha-\beta)^n}{n\beta-q}\prod_{i=0}^{n-1} \left(i+\frac{n\beta-q}{\alpha-\beta}\right)\\
&=(-1)^{n-1}\prod_{i=1}^{n-1}(n\beta-q+(\alpha-\beta)i).
\qedhere
\end{align*}
\end{proof}

%%%%%%%%%%%%%%%%%%%%%%%%%%%%%%%%%%%%%%%%%%%%%%%%%%%%%%%%%%%%%%%%%%%%%%%%%%
\section{The special cases of the braid and the Shi arrangements}

The first cases of $[a,b]$-gain graphs with $a+b=0$ or $a+b=1$ are obtained by taking $a=0$. 
The gain graph with  $a+b=0$ and $a=0$ corresponds to the braid arrangement 
and the gain graph with  $a+b=1$ and $a=0$ corresponds to the Shi arrangement. 
A bijective correspondence for the braid arrangement, inducing activity preserving bijections between regions and NBC sets or increasing trees,
appears in the paper  \cite{GLV}.

\begin{cor}
The NBC sets of the braid arrangement in dimension $n$ are in one-to-one correspondence with 
the decreasing labelled trees on $n+1$ vertices.
\end{cor}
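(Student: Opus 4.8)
The plan is to specialize Theorem~\ref{aa} to the braid case $a=b=0$ and then convert the resulting labelled forests into a single decreasing tree by two elementary relabelling/rooting operations. First I would invoke Theorem~\ref{aa} in its forest form: an NBC set of $K_n^{00}$ is a disjoint union of NBC trees, and applying the tree bijection of Theorem~\ref{aa} to each component (with $a=b=0$, so $a+b=0$) matches it with a $(1-a,b)=(1,0)$-rooted labelled tree. Hence the NBC sets of $\cA[K_n^{00}]$ correspond bijectively to $(1,0)$-rooted labelled forests on $[n]$. Because $\beta=0$ forbids any edge whose parent exceeds its child, a $(1,0)$-tree is exactly an \emph{increasing} tree: it is rooted at its minimum, its labels increase away from the root, and the single allowed edge-weight $1$ carries no information. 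Thus the NBC sets of the braid arrangement correspond to forests of increasing trees on $[n]$.

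Next I would pass from increasing forests to a single decreasing tree. Applying the label complementation $i \mapsto n+1-i$ turns every increasing tree into a decreasing one and hence sends a forest of increasing trees on $[n]$ to a forest of decreasing trees on $[n]$; this is clearly an involutive bijection. In such a forest each component is rooted at its own maximum. I would then adjoin a new vertex labelled $n+1$ and join it by an edge to the root of every component. Since $n+1$ exceeds all existing labels and each component root is the maximum of its component, all the new edges descend to smaller labels, so the result is a decreasing tree on $[n+1]$ rooted at $n+1$.

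The only point requiring care is that this rooting step is a genuine bijection, which is the classical forest-to-tree correspondence. In any decreasing tree on $[n+1]$ the root must be the largest label $n+1$, because every child is smaller than its parent; deleting this root yields the subtrees hanging from the former children of $n+1$, and each such subtree is again decreasing, hence rooted at its own maximum. So deletion of $n+1$ recovers precisely a forest of decreasing trees on $[n]$, inverting the adjunction. Composing the three bijections---Theorem~\ref{aa} (forest form), label complementation, and root adjunction---gives the desired one-to-one correspondence. As a sanity check the counts agree: the braid arrangement has $n!$ regions, and the number of decreasing labelled trees on $n+1$ vertices is $(n+1-1)!=n!$. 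I expect the main (though modest) obstacle to be the bookkeeping of the complementation, so that the ``smallest'' corners produced by Theorem~\ref{aa} line up correctly with the ``largest'' roots of decreasing trees; once the map $i\mapsto n+1-i$ is fixed, the remaining verifications are routine.
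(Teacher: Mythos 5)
Your proposal is correct and follows essentially the same route as the paper: specialize Theorem \ref{aa} to $a=b=0$, identify $(1,0)$-trees with increasing trees (the forced weight $1$ being discarded), and adjoin one extra vertex joined to the root of every component of the NBC forest. The only difference is that you make explicit the complementation $i\mapsto n+1-i$ needed to obtain literally \emph{decreasing} trees on $n+1$ vertices; the paper's own proof skips this step (it adds a vertex $0$ and produces \emph{increasing} trees, leaving the increasing/decreasing switch implicit), so your version actually matches the wording of the statement more precisely.
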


\begin{proof}
Theorem \ref{aa} tells us that the set of NBC trees of the braid arrangement (case $a=b=0$) 
is in one-to-one correspondence with the set of $(1,0)$-labelled trees with $n$ vertices. 
Such labelled trees have no possible value on edges $(i,j)$ when $i>j$ and have the value 1 on edges $(i,j)$ when $i<j$ (and since there is no choice we can forget the value). 
This means that the correspondence of NBC trees of the braid arrangement is with the set of rooted labelled trees such that the label of the father is always smaller than the label of the son (such a tree is called 
an increasing tree). 
To get the bijection between the set of NBC sets and the set of increasing rooted labelled
 trees we just need to add vertex $0$ and to connect it to  the different 
increasing rooted labelled trees coming from the NBC trees (components).
%\comment{EXPLAIN THAT.}
\end{proof}

For the Shi arrangement, Pak and Stanley \cite{St2} gave
a bijection between the regions and the parking functions. Lots of other bijections exist, as the regions of the
Shi arrangement are also in bijections with  labelled trees. 
See \cite{A99,AL99,GL} and references therein.

\begin{cor}
The NBC sets of the Shi arrangement in dimension $n$ are in one-to-one correspondence with 
the labelled trees on $n+1$ vertices.
\end{cor}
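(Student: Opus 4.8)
The plan is to specialize Theorem \ref{aa} to the Shi gain graph $S_n=K_n^{01}$, where $a=0$ and $b=1$, so that $a+b=1$ and $(1-a,b)=(1,1)$. Theorem \ref{aa} then supplies a bijection between the NBC trees of $K_n^{01}$ and the $(1,1)$-rooted labelled trees on $[n]$. First I would observe that a $(1,1)$-tree carries no genuine weight information: by definition each edge receives a label in $[1,1]=\{1\}$ regardless of whether the parent is smaller or larger than the child, so the unique admissible label is $1$ and may simply be forgotten. Hence the NBC trees of the Shi arrangement are in one-to-one correspondence with ordinary rooted labelled trees on $[n]$, with no order restriction between a parent and its child.

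Next I would pass from trees to sets exactly as in the braid corollary. As recalled in Section \ref{nbc-semi}, an NBC set of $\cA[K_n^{01}]$ is a disjoint union of NBC trees, i.e.\ an NBC forest; under the correspondence just described these are precisely the $(1,1)$-rooted labelled forests on $[n]$, that is, arbitrary rooted labelled forests on $[n]$ (again with the trivial weight $1$ on every edge, which can be discarded).

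Finally I would apply the standard bijection between rooted forests on $n$ vertices and labelled trees on $n+1$ vertices: adjoin a new vertex $0$ and join it by an edge to the root of each component of the forest. This produces a labelled tree on the vertex set $\{0,1,\ldots,n\}$; conversely, rooting any labelled tree on $n+1$ vertices at $0$ and deleting $0$ returns a rooted labelled forest on $[n]$ whose roots are the former neighbours of $0$. These two maps are mutually inverse, so rooted labelled forests on $[n]$ are in bijection with labelled trees on $n+1$ vertices, completing the chain of bijections.

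The only point requiring care---and the main difference from the braid case---is the role of the weights. There $(\alpha,\beta)=(1,0)$ annihilates all decreasing edges and forces increasing trees, whereas here $(\alpha,\beta)=(1,1)$ makes both intervals equal to $\{1\}$ and imposes no relation between the labels of a parent and a child. Consequently the target of the bijection is the \emph{full} set of labelled trees on $n+1$ vertices, with no increasing or decreasing condition; a count confirms consistency, since the rooted forests number $(n+1)^{n-1}$, matching Cayley's formula for labelled trees on $n+1$ vertices and the region count $(bn+1)^{n-1}$ of the preceding corollary with $b=1$.
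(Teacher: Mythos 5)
Your proof is correct and follows essentially the same route as the paper: specialize Theorem \ref{aa} to $(a,b)=(0,1)$, discard the trivial weight $1$ on every edge of the $(1,1)$-trees, and pass from NBC forests to labelled trees on $n+1$ vertices by adjoining one new vertex joined to each component's root (the paper attaches vertex $n+1$ where you attach $0$, an immaterial difference). Your added consistency check against Cayley's formula and the region count $(bn+1)^{n-1}$ is a nice touch but does not change the argument.
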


\begin{proof}
Theorem \ref{aa} tells us that the set of NBC trees of the Shi arrangement (case $a=0$ and  $b=1$) 
is in one-to-one correspondence with the set of $(1,1)$-labelled trees with $n$ vertices. 
Such labelled trees have the value 1 on edges $(i,j)$ when $i<j$ as well as when $j>i$. 
As in the previous proof, since there is only one possible value it can be ignored. 
This means that the correspondence of NBC trees of the Shi arrangement is with the set 
of  rooted labelled trees. To get the bijection between the set 
of NBC sets and the set of labelled trees on $n+1$ vertices we just need to add vertex $n+1$ and to connect the different  rooted labelled trees coming from the NBC trees (components).
\end{proof}

\section{Conclusion}

In this paper, we show that given a height function on a gain graph $K_n^{ab}$ with $a+b=0$ or 1, the corresponding NBC trees
with $n$ vertices and corner $c$ are in bijection with some trees with $n$ vertices and root $c$. In a forthcoming paper, we will show that this is still
true in the Linial case; that is, $a=0$ and $b=2$ \cite{forge}. 
We also investigate whether this is true for other deformations of the Braid arrangement \cite{CFM} and think that 
such constructions might also exist for hyperplane arrangements for root systems studied in \cite{A99}.\\

\noindent{\bf Acknowledgments.} The authors want to thank Tom Zaslavsky for his help during the elaboration
and the redaction of this work. We also thank Ira Gessel, Emeric Gioan and the anonymous referees for
their constructive comments on the paper and the bibliography.

%%%%%%%%%%%%%%%%%%%%%%%%%%%%%%%%%%%%%%%%%%%%%%%%%%%%%%%%%%%%%

\end{document}